
\documentclass[12pt,thmsa, reqno]{amsart}

\usepackage[dvips]{graphics}
\input{amssym.def}
\input{amssym.tex}

\def \x{{\bf x}}
\def \s{{\bf s}}
\def \r{{\bf r}}
\def \v{{\bf v}}
\def \y{{\bf y}}
\def \u{{\bf u}}

\def\gk{\Gamma_k}

\def\Cal{\mathcal}

\def\pk{{\Cal P}_k}

\def\pq{{\Cal P}_q}

\def\A{{\Cal A}}

\def\P{{\Cal P}}

\def\S{{\Cal S}}

\def\gk{\Gamma_k}

\def\pk{\P_k}

\def\bbr{{\Bbb R}}

\def\rank{{\hbox{\rm rank}}}

\def\tr{{\hbox{\rm tr}}}

\def\det{{\hbox{\rm det}}}

\def\gk{\Gamma_k}

\def\pk{\P_k}

\def\rn{\bbr^n}
\def\sn{S^{n-1}}

\def\part{\partial}
\def\intl{\int\limits}
\def\b{\beta}

\def\Gam{\Gamma}

\def\a{\alpha}

\def\Del{\Delta}

\def\gam{\gamma}

\def\sig{\sigma}

\def\z{\zeta}
\def\th{\theta}

\def\t{\tau}

\def\chi{{\bf 1}}

%Gothic letters
\font\frak=eufm10

\def\fr#1{\hbox{\frak #1}}

\def\frM{\fr{M}}

%% NEW %%%%%%%%%%
\newtheorem{theorem}{Theorem}[section]
\newtheorem{lemma}[theorem]{Lemma}

\theoremstyle{definition}

\theoremstyle{remark}
\newtheorem{remark}[theorem]{Remark}

\theoremstyle{corollary}
\newtheorem{corollary}[theorem]{Corollary}

\numberwithin{equation}{section}

%    Absolute value notation

\newcommand{\be}{\begin{equation}}
\newcommand{\ee}{\end{equation}}

\newcommand{\bea}{\begin{eqnarray}}
\newcommand{\eea}{\end{eqnarray}}
\newcommand{\Bea}{\begin{eqnarray*}}
\newcommand{\Eea}{\end{eqnarray*}}

\def\sideremark#1{\ifvmode\leavevmode\fi\vadjust{\vbox to0pt{\vss% the remark
 \hbox to 0pt{\hskip\hsize\hskip1em%                          will appear only
\vbox{\hsize2cm\tiny\raggedright\pretolerance10000%          on the side
 \noindent #1\hfill}\hss}\vbox to8pt{\vfil}\vss}}}%
                                                   %          in 2cm

                                                   %          wide box
                                                   %

%    Blank box placeholder for figures (to avoid requiring any
%    particular graphics capabilities for printing this document).

\begin{document}

\title[Blaschke-Petkantschin Formula]
{On the Blaschke-Petkantschin Formula and Drury's Identity}

\author{B. Rubin}
\address{
Department of Mathematics, Louisiana State University, Baton Rouge,
LA, 70803 USA}

\email{borisr@lsu.edu}
%\thanks{The  research  was supported in part by
% the Louisiana EPSCoR program, sponsored  by NSF and the Board of
%Regents Support Fund.}

\subjclass[2010]{Primary 44A12; Secondary  28A75, 60D05}

%\date{July, 21, 2017 }

%\dedicatory{Dedicated to X
%on the occasion of his 100th birthday}

\keywords{ Blaschke-Petkantschin formula, Drury's identity, $k$-plane transforms, Grassmann manifolds.}

\begin{abstract}
The Blaschke-Petkantschin formula is a variant of the  polar decomposition of the $k$-fold Lebesgue measure on $\mathbb {R}^n$ in terms of the corresponding  measures on $k$-dimensional linear subspaces of $\mathbb {R}^n$. We suggest a new elementary proof of this formula  and discuss its connection with the celebrated Drury's identity that plays a key role in the study of mapping properties of the Radon-John $k$-plane transforms.  We give a new derivation of this identity and provide it with precise information about constant factors and the class of admissible functions.

\end{abstract}

\maketitle

\section{Introduction}

\setcounter{equation}{0}

The classical  Blaschke-Petkantschin formula gives  decomposition of the Euclidean measure on $k$ copies of $\rn$ into the corresponding measures on $k$-dimensional subspaces of $\rn$ with the relevant Jacobian; see the case $q=k$ in (\ref{BlP}) below. After the
 pioneering works by Blaschke \cite{Bla} and  Petkantschin \cite{P}, this formula and its modifications arise in different aspects
 of Analysis, Integral Geometry, Multivariate Statistics, and Probability; see, e.g., \cite{BL, DPP, J, M, Mi, San, SchW92, SchW}, to mention a few.  The books \cite{SchW92, SchW} contain a nice history of the subject.

The simplest case of the Blaschke-Petkantschin formula corresponds to  $k=1$ when the standard polar decomposition yields
 \be \label{f1} \intl_{\rn} \!f(x)\, dx=\!\!\intl_{\sn}\! \!d\th \intl_0^\infty \!f(r\th) r^{n-1}\, dr\!=\!
\frac{1}{2}\intl_{\sn} \!\!\!d\th \!\intl_{-\infty}^\infty \!\!f(r\th) |r|^{n-1}\, dr.\ee
This gives
\be \label{f2} \intl_{\rn} f(x)\, dx=\frac{\sig_{n-1}}{2}\intl_{G_{n,1}}\!\! d\ell \intl_{\ell} \!f(x) |x|^{n-1} d_\ell x, \ee
where $G_{n,1}$ is the Grassmann manifold of lines $\ell$ through the origin, $d\ell$ is the standard probability measure on $G_{n,1}$, $\sig_{n-1}$ denotes the area of the unit sphere $\sn$ in $\rn$, and $d_\ell x$ stands for the Lebesgue measure on $\ell$.

The present article is influenced by   intimate connection between the   Blaschke-Petkantschin formula and its modification, known as {\it ``Drury's identity"}, which was independently discovered by Drury \cite{Dru84} in his study of norm estimates for the $k$-plane Radon-John transform;  see the case $k+\ell+1=n$ in (\ref{Buu1}). This remarkable connection was pointed out by  Baernstein II and  Loss \cite{BL}.
A number of breakthrough applications  of  Drury's identity to difficult problems related to $L^p$-$L^q$ estimates for $k$-plane transforms can be found in the works by Christ \cite{Chr84}, Drouot \cite{Dro14}, Flock \cite{Fl16},  Bennett, Bez,  Flock,   Gutiérrez, and  Iliopoulou \cite {BBFGI}.

Another motivation for  writing this article was Drury's observation \cite{Dru84} that his formula has a lot in common with
 analytic continuation of Riesz distributions on matrix spaces. These distributions and the corresponding potential operators were studied by  Stein \cite{St}, Gelbart \cite{Ge}, Khekalo \cite {Kh}, Ra\"{\i}s \cite{Ra}, Rubin \cite{Ru06a}, where one can find further  references.

Most of known proofs of the  Blaschke-Petkantschin formula employ either a super-powerful machinery  of differential forms (see, e.g., \cite {San, M, Mi}) or a clever inductive argument. In Section 2 we present an elementary proof of a slightly more general version of this formula, following the same idea as  in (\ref{f1})-(\ref{f2})  and using  the well-known polar decomposition of matrices. As a consequence, in Section 3  we formulate the affine version of this formula previously known under more restrictive assumptions, and obtain a slight generalization of Drury's identity  with sharp constant and precise information about  the class of admissible functions. This constant was not specified in \cite{Dru84, BL}. It was later  obtained in \cite {BBFGI} in a conceptually more complicated way than we do.

After finishing the first version of the paper, the author became aware of close works by Moghadasi \cite{Mog} and
Forrester \cite{Fo} devoted to application of the matrix polar decomposition to derivation of the Blaschke-Petkantschin formula. Our reasoning essentially differs from \cite{Fo, Mog}.

\section{Derivation of the Blaschke-Petkantschin Formula}

The  reasoning in (\ref{f1})-(\ref{f2}) extends to  functions $F(\x)=F(x_1, \ldots, x_k)$ on  $(\rn)^k$ if the latter is treated as the space $\frM_{n,k}$  of real matrices having $n$ rows and $k$  columns and the polar decomposition $dx=r^{n-1}\, dr \,d\th$  is replaced by its analogue for $d\x$. Below we recall  basic facts.

 If $\x=(x_1, \ldots, x_k)=(x_{i,j})\in \frM_{n,k}$, then
$d\x=\prod^{n}_{i=1}\prod^{k}_{j=1}  dx_{i,j}$ is the elementary volume in $\frM_{n,k}$.
 In the following
  $\x^T$ denotes the transpose of  $\x$ and $I_k$ is the identity $k \times k$
  matrix. Given a square matrix $\bf a$,  we denote by $|\bf a|$ the absolute value of the determinant of $\bf a$;  $\tr (\bf a)$
  stands for  the trace  of $\bf a$.

 Let $\S_k$ be the space of $k \times k$ real symmetric matrices
$\s=(s_{i,j}),
 \, s_{i,j}=s_{j,i}$. It is a measure space   isomorphic to $\bbr^{k(k+1)/2}$
 with the volume element $d\s=\prod_{i \le j} ds_{i,j}$.
We denote by  $\pk$ the  cone of positive definite
matrices in $\S_k$. For $n\geq k$, let $V_{n,k}= \{\v \in \frM_{n,k}: \v^T\v=I_k \}$
 be the Stiefel manifold of orthonormal $k$-frames in $\bbr^n$.
 If $n=k$, then $V_{n,n}=O(n)$ is the orthogonal group in $\bbr^n$.
  The group $O(n)$
 acts on $V_{n,k}$ transitively by the rule $g: \v\to g\v, \quad g\in
 O(n)$, in the sense of matrix multiplication.   We fix the corresponding
invariant measure $d\v$ on  $V_{n,k}$   normalized by
\be\label{2.16} \sigma_{n,k}
 \equiv  \! \intl_{V_{n,k}} d\v \! = \frac {2^k \pi^{nk/2}} {\gk
 (n/2)}, \ee
 where
\be\label{2.4}
 \gk (\a)\! =\! \intl_{\pk} \! \exp(-\tr (\r)) |\r|^{\a -(k+1)/2} d\r=\pi^{k(k-1)/4}\prod\limits_{j=0}^{k-1} \Gam (\a\! - \! j/2) \ee
is the  Siegel gamma  function associated to the cone $\pk$
 \cite[ p. 62]{Mu}. This integral  converges absolutely
 if and only if $Re \, \a>(k-1)/2$.

\begin{lemma}\label{l2.3} {\rm (polar decomposition).}
Let $\;\x \in \frM_{n,k}, \; n \ge k$. If  $\;\rank (\x)= k$, then $\x$ is uniquely decomposed as
\[
\x=\v\r^{1/2}, \qquad \v \in V_{n,k},   \qquad \r=\x^T\x \in\pk,\] and for $F\in L^1 (\frM_{n,k})$ we have
\be\label{2.44}
\intl_{\frM_{n,k}} F(\x)\, d\x=2^{-k}\intl_{V_{n,k}}d\v \intl_{\pk}  F(\v\r^{1/2})\, |\r|^{(n-k-1)/2} d\r. \ee
\end{lemma}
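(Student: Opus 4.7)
My plan is to proceed in three stages: the algebraic uniqueness of the polar factorization, a reduction of (\ref{2.44}) to radial functions via $O(n)$-invariance, and identification of the induced measure on $\pk$ by a $GL(k)$-equivariance argument combined with a Gaussian probe.

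For the algebraic step, I would note that since $\rank(\x)=k$, the matrix $\r:=\x^T\x$ satisfies $u^T\r u=|\x u|^2>0$ for $u\ne 0$ and hence lies in $\pk$; it therefore admits a unique positive-definite square root $\r^{1/2}\in\pk$, and $\v:=\x\r^{-1/2}$ satisfies $\v^T\v=I_k$. The factorization $\x=\v\r^{1/2}$ is then manifestly unique, and the rank-deficient set $\{\rank(\x)<k\}$ is a proper algebraic subvariety of $\frM_{n,k}$, hence Lebesgue-null and negligible for (\ref{2.44}).

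Next I would reduce to radial integrands by exploiting that both sides of (\ref{2.44}) are invariant under $\x\mapsto h\x$, $h\in O(n)$: this substitution preserves $d\x$, sends $\v\mapsto h\v$, fixes $\r$, and leaves $d\v$ invariant. Averaging $F(h\x)$ against normalized Haar measure on $O(n)$ and using the transitivity of $O(n)$ on $V_{n,k}$ with orbital image $\sig_{n,k}^{-1}\,d\v$ reduces the identity to the case $F(\x)=f(\r)$, namely
\be\label{radial}
\intl_{\frM_{n,k}} f(\x^T\x)\,d\x = 2^{-k}\,\sig_{n,k}\intl_{\pk} f(\r)\,|\r|^{(n-k-1)/2}\,d\r.
\ee

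To establish (\ref{radial}), I would define the pushforward measure $\mu$ on $\pk$ by $\intl f\,d\mu:=\intl_{\frM_{n,k}} f(\x^T\x)\,d\x$ and test it against the right-action $\x\mapsto \x g^T$ for $g\in GL(k)$. The Jacobian on $\frM_{n,k}$ equals $|g|^n$ (one factor of $|g|$ per row), while the induced action $\r\mapsto g\r g^T$ on $\S_k$ has Jacobian $|g|^{k+1}$ (checked on $g=cI_k$ and extended by multiplicativity of the induced character on $GL(k)$). A short comparison shows that $\mu$ and the candidate $|\r|^{(n-k-1)/2}\,d\r$ transform by the same multiplicative factor, so their Radon--Nikodym derivative is $GL(k)$-invariant and hence constant by transitivity of $GL(k)$ on $\pk$. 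To pin this constant I would test on the Gaussian $f(\r)=\exp(-\tr\r)$: the left side of (\ref{radial}) equals $\pi^{nk/2}$ by separation of variables, while $\intl_{\pk}\exp(-\tr\r)|\r|^{(n-k-1)/2}d\r=\gk(n/2)$ by (\ref{2.4}), and the normalization (\ref{2.16}) then forces the prefactor $2^{-k}\sig_{n,k}$.

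The main technical care goes into the two Jacobian computations---especially the factor $|g|^{k+1}$ for the conjugation action on the $k(k+1)/2$-dimensional space $\S_k$; once these are in hand, the equivariance argument is purely formal and the constant identification reduces to a single Gaussian calculation via (\ref{2.4}) and (\ref{2.16}).
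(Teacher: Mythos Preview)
Your argument is correct. Note, however, that the paper does not actually prove this lemma: immediately after stating it, the author writes ``This statement and its generalizations can be found in different sources'' and cites \cite{Herz, Mu, FT, Zha1}. So there is no in-paper proof to compare against; the lemma is quoted as a known result and used as a black box in the proof of Theorem~\ref{the}.

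Your route---$O(n)$-averaging to reduce to $F(\x)=f(\x^T\x)$, then identifying the pushforward measure on $\pk$ by $GL(k)$-equivariance and calibrating the constant with the Gaussian $f(\r)=e^{-\tr\r}$ via (\ref{2.4}) and (\ref{2.16})---is a standard and clean derivation. Two small points you may want to make explicit in a written-up version: (i) the equivariance argument shows that $\mu$ and $|\r|^{(n-k-1)/2}d\r$ are relatively invariant measures on the homogeneous space $\pk\cong GL(k)/O(k)$ with the same multiplier, and uniqueness of such measures up to scalar is what lets you conclude proportionality (equivalently, you need absolute continuity of $\mu$, which follows since $\x\mapsto\x^T\x$ is a smooth submersion on the full-rank set); (ii) your determination of the Jacobian $|g|^{k+1}$ for $\r\mapsto g\r g^T$ by testing on scalars and invoking multiplicativity relies on the fact that every continuous homomorphism $GL(k)\to\bbr_+^\times$ is a power of $|\det|$, which is true but worth a word. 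Neither point is a gap in the approach.
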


This statement and its generalizations can be found in different
sources, see, e.g., \cite[ p. 482]{Herz}, \cite[pp. 66, 591]{Mu},
\cite[p. 130]{FT}, \cite[Lemma 3.1]{Zha1}. For $\;\x \in \frM_{n,k}$, we set
\be
 |\x|_k =(\det (\x^T\x))^{1/2},\ee
 which  is the volume of the parallelepiped spanned  by the column-vectors $x_1, \ldots, x_k$ of the matrix $\x$
\cite[ p. 251]{G}.

 Our aim is to replace integration over the Stiefel manifold in (\ref{2.44}) by integration over the Grassmann manifold  $G_{n,k}$  of $k$-dimensional linear subspaces of $\rn$. We perform this replacement in a slightly more general fashion.  Given $\xi\in G_{n,k}$, we denote by $d\xi $ the $O(n)$-invariant probability measure on $G_{n,k}$ and write $\xi^q$ for the collection of $q$ copies of $\xi$.

\begin{theorem}\label{the} Let $1\le q\le k\le n$. If $F\in L^1 (\frM_{n,q})$,  then
\be\label{BlP}
\intl_{\frM_{n,q}} F(\x)\, d\x= \frac{\sigma_{n,q}}{\sigma_{k,q}} \intl_{G_{n,k}} d\xi \intl_{\xi^q} F(\x)\, |\x|_q^{n-k}\, d_\xi \x,\ee
where   $\x=(x_1, \ldots, x_q)$ and $d_\xi \x = d_\xi x_1 \ldots  d_\xi x_q $
 stands for the usual Lebesgue integration over $\xi^q$.
\end{theorem}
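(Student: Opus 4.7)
The plan is to apply Lemma~\ref{l2.3} twice---once to the left-hand side of (\ref{BlP}) and once to the inner integral on the right---reducing the theorem to a natural fibration formula expressing the invariant measure on $V_{n,q}$ through the Grassmannian $G_{n,k}$ and the smaller Stiefel manifold $V_{k,q}$.

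First, applying Lemma~\ref{l2.3} directly to the left-hand side of (\ref{BlP}) yields
\[
\intl_{\frM_{n,q}} F(\x)\, d\x = 2^{-q}\intl_{V_{n,q}}d\v \intl_{\pq} F(\v\r^{1/2})\, |\r|^{(n-q-1)/2}\, d\r.
\]
Next I rewrite the inner integral on the right-hand side of (\ref{BlP}) in the same spirit. For each $\xi\in G_{n,k}$, pick an $n\times k$ matrix $g_\xi$ whose columns form an orthonormal basis of $\xi$ (so $g_\xi^T g_\xi=I_k$). Every $\x\in\xi^q$ has a unique representation $\x=g_\xi\y$ with $\y\in\frM_{k,q}$; since $g_\xi$ is an isometry, $d_\xi\x=d\y$ and $|\x|_q=|\y|_q$. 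Applying Lemma~\ref{l2.3} to $\y\in\frM_{k,q}$ (with the pair $(n,k)$ there replaced by $(k,q)$), and observing that $|\y|_q^{n-k}=|\r|^{(n-k)/2}$ combines with the built-in Jacobian $|\r|^{(k-q-1)/2}$ to produce $|\r|^{(n-q-1)/2}$, I obtain
\[
\intl_{\xi^q} F(\x)\, |\x|_q^{n-k}\, d_\xi\x = 2^{-q}\intl_{V_{k,q}}d\mathbf{w}\intl_{\pq} F(g_\xi\mathbf{w}\r^{1/2})\, |\r|^{(n-q-1)/2}\, d\r.
\]

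Substituting the second display into the right-hand side of (\ref{BlP}), interchanging the order of integration, and comparing with the first display, the theorem reduces to the fibration identity
\[
\intl_{V_{n,q}}\Phi(\v)\, d\v = \frac{\sigma_{n,q}}{\sigma_{k,q}}\intl_{G_{n,k}}d\xi\intl_{V_{k,q}}\Phi(g_\xi\mathbf{w})\, d\mathbf{w},\qquad \Phi\in L^1(V_{n,q}),
\]
applied with $\Phi(\v)=F(\v\r^{1/2})$ for each fixed $\r\in\pq$.

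Establishing this identity is the main obstacle. The section $\xi\mapsto g_\xi$ cannot in general be chosen globally continuously, but the inner Stiefel integral is independent of that choice thanks to the right $O(k)$-invariance of $d\mathbf{w}$; with this ambiguity removed, the right-hand side defines a finite $O(n)$-invariant Borel measure on $V_{n,q}$, and by uniqueness of Haar measure it must be a scalar multiple of $d\v$. Testing on $\Phi\equiv 1$ and using (\ref{2.16}) pins the constant: the left side equals $\sigma_{n,q}$, while the right equals $(\sigma_{n,q}/\sigma_{k,q})\cdot 1\cdot\sigma_{k,q}=\sigma_{n,q}$. This yields (\ref{BlP}).
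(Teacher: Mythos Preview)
Your argument is correct and follows the same overall strategy as the paper: apply the matrix polar decomposition (Lemma~\ref{l2.3}) once on each side so that both reduce to the same integral over $\pq$ with weight $|\r|^{(n-q-1)/2}$, leaving only a Stiefel/Grassmannian identity to verify.

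The one place where your route differs from the paper is in how that identity is established. The paper works \emph{computationally}: it rewrites $\intl_{V_{n,q}}$ as $\sigma_{n,q}\intl_{O(n)}$, inserts an extra average over $O(k)$ acting on the first $k$ coordinates, and then recognises the resulting $O(k)$-integral as $\sigma_{k,q}^{-1}\intl_{V_{k,q}}$; the passage from $O(n)$ to $G_{n,k}$ is then immediate. You instead isolate the abstract fibration identity
\[
\intl_{V_{n,q}}\Phi(\v)\,d\v=\frac{\sigma_{n,q}}{\sigma_{k,q}}\intl_{G_{n,k}}d\xi\intl_{V_{k,q}}\Phi(g_\xi\mathbf w)\,d\mathbf w
\]
and justify it by observing that the right side is an $O(n)$-invariant positive functional on $V_{n,q}$, hence a multiple of $d\v$ by uniqueness of Haar measure, with the constant fixed by $\Phi\equiv 1$. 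Both are perfectly valid; the paper's version is more explicit and self-contained, while yours is slightly cleaner conceptually but leans on the uniqueness theorem (and implicitly on the fact that the inner integral, being independent of the choice of $g_\xi$, descends to a measurable function on $G_{n,k}$). Neither approach offers a real advantage over the other here.
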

\begin{proof} The case $q=k$ in (\ref{BlP}) is the classical Blaschke-Petkantschin formula. To prove the theorem, we denote by $I$ the left-hand side of (\ref{BlP}).  By Lemma \ref{l2.3},
\bea I&=&2^{-q} \intl_{V_{n,q}} d\v \intl_{\pq} F(\v\r^{1/2})\,|\r|^{(n-q-1)/2} d\r \nonumber\\
&=&2^{-q}\sigma_{n,q} \intl_{O(n)} d\a   \intl_{\pq}  F \left (\a \left[\begin{array} {c} I_q \\ 0
\end{array} \right] \r^{1/2} \right )\,   |\r|^{(n-q-1)/2} d\r. \nonumber\eea
We replace $\a$ by
\[ \a  \left [\begin{array}{cc}  \b & 0 \\
0& I_{n-k}
\end{array}\right], \qquad \b \in O(k),\]
and integrate in $\b$ with respect to the standard probability measure on $O(k)$. Changing the order of integration, we get
\bea I&=&2^{-q} \sigma_{n,q} \intl_{O(n)} d\a \intl_{O(k)} d\b  \intl_{\pq}
 F \left (\a \left[\begin{array} {c} \b \left[\begin{array} {c} I_q \\ 0 \end{array} \right] \\ 0 \end{array} \right] \r^{1/2} \right )\,   |\r|^{(k-q-1)/2}\, |\r|^{(n-k)/2} d\r \nonumber\\
&=&\frac{2^{-q} \sigma_{n,q}}{\sigma_{k,q}} \intl_{O(n)} d\a  \intl_{V_{k,q}} d\u  \intl_{\pq}
F \left (\a \left[\begin{array} {c} \u\r^{1/2} \\ 0 \end{array} \right]  \right )\,  |\r|^{(k-q-1)/2}\, |\r|^{(n-k)/2} d\r.\nonumber\eea

Now we apply Lemma \ref{l2.3} again, but in the opposite direction, to obtain
\bea \label{chu}I&=&\frac{\sigma_{n,q}}{\sigma_{k,q}}  \intl_{O(n)} d\a \intl_{\frM_{k,q}}  F \left (\a \left[\begin{array} {c} \y \\ 0
\end{array} \right]  \right )\,  |\y|_q^{n-k} d\y\\
&=&\frac{\sigma_{n,q}}{\sigma_{k,q}}  \intl_{O(n)} d\a \,\Big ( \intl_{\bbr^k} \ldots  \intl_{\bbr^k} \Big )   F \left (\a \left[\begin{array} {c} \y \\ 0
\end{array} \right]  \right )\,  |\y|_q^{n-k}\, dy_1  \ldots dy_q \nonumber\\
\label{chu1}&=&\frac{\sigma_{n,q}}{\sigma_{k,q}} \intl_{G_{n,k}} d\xi \, \Big ( \intl_{\xi} \ldots  \intl_{\xi} \Big )  F(\x)\,|\x|_q^{n-k}\, d_\xi x_1  \ldots d_\xi x_q, \eea
which gives (\ref{BlP}).

\end{proof}

\begin{remark} For the case $q=k$, a certain analogue of (\ref{chu}) was obtained by  Forrester \cite[Proposition 4]{Fo} in different notation, including real, complex, and quaternionic cases. However, Forrester's result is somewhat incomplete because it does not contain  the transition from the  integration over square matrices to integration over subspaces that should be properly defined in the complex and quaternionic cases; cf. the transition from (\ref{chu}) to (\ref{chu1}). In the real case and $q=k$, this transition is performed by  Moghadasi \cite[p. 323]{Mog}.
\end {remark}

\section{Affine Blaschke-Petkantschin Formula and Drury's Identity}

The affine Blaschke-Petkantschin formula  (see Theorem \ref{BlP2vv} below) can be easily derived from  (\ref{BlP}). For the sake of completeness, we reproduce this derivation in Appendix, following the reasoning from Gardner \cite [Lemma 5.5]{Ga07} and removing unnecessary restrictions that were made in  \cite{Ga07}.  Once (\ref{BlP})  is established, the derivation of its affine version needs actually nothing but Fubini's Theorem.

We first observe that the volume $|\x|_q$ of the parallelepiped in (\ref{BlP}) can be replaced by the volume $\Del_q (\x)$ of the  convex hall of $\{0, x_1, \ldots, x_q \}$ by the known formula
\be\label{mkj}
\Del_q (\x)=\frac{1}{q!}\, |\x|_q. \ee
Given $\tilde \x=(x_0, x_1, \ldots, x_q) \in \frM_{n,q+1}$, let
  \be\label{mkj1} \Del_q(\tilde \x)\equiv \Del_q(x_0, x_1, \ldots, x_q)\ee
be the $q$-dimensional volume of the convex hall of $\{x_0, x_1, \ldots, x_q\}$. We denote by
   $\A_{n,k}$  the Grassmannian bundle of affine $k$-planes $\t$ with the standard measure $d\t$; see, e.g., \cite{Ru04d}.

\begin{theorem} \label{BlP2vv} Let $1\le q\le k\le n$. If $F \in L^1 (\frM_{n,q+1})$, then
\be\label{BlP2}
\intl_{\frM_{n,q+1}} \!  \!F(\tilde \x)\, d\tilde \x= c \intl_{\A_{n,k}}  d\t\intl_{\t^{q+1}} F(\tilde \x)\, \Del_q^{n-k}(\tilde \x)\, d_\t \tilde \x,\ee
where $\t^{q+1}$ is the collection of $q+1$ copies of $\t$,
\be\label{mkj10}c=\frac{(q!)^{n-k}\sigma_{n,q}}{\sigma_{k,q}},\ee
$d_\t  \tilde \x = d_\t x_0 \ldots  d_\t x_q $
 stands for the usual Lebesgue integration over $\t^{q+1}$.
\end{theorem}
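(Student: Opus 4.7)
The plan is to reduce the affine formula (\ref{BlP2}) to its linear counterpart (\ref{BlP}) by a translation in the first variable. Writing $\tilde\x=(x_0,x_1,\ldots,x_q)$ and substituting $y_i=x_i-x_0$ for $i=1,\ldots,q$ (a change with unit Jacobian in each variable), Fubini's theorem gives $d\tilde\x = dx_0\,d\y$ with $\y=(y_1,\ldots,y_q)\in\frM_{n,q}$. Translation invariance of $\Del_q$ together with (\ref{mkj}) yields $|\y|_q = q!\,\Del_q(0,y_1,\ldots,y_q) = q!\,\Del_q(\tilde\x)$.

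For each fixed $x_0$, I would apply Theorem \ref{the} to the inner integral over $\frM_{n,q}$ with the function $G(\y)=F(x_0,x_0+y_1,\ldots,x_0+y_q)$. This produces the factor $\sigma_{n,q}/\sigma_{k,q}$, the weight $|\y|_q^{n-k}=(q!)^{n-k}\Del_q^{n-k}(\tilde\x)$, and integration of $\y$ over $\xi^q$ for $\xi\in G_{n,k}$.

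Next, I would decompose the external integral over $x_0\in\bbr^n$ using the orthogonal splitting $\bbr^n=\xi\oplus\xi^\perp$, writing $x_0=v+u$ with $v\in\xi$ and $u\in\xi^\perp$. The $q+1$ points $x_0,\,x_0+y_1,\ldots,x_0+y_q$ then all lie in the affine $k$-plane $\t=\xi+u$, and the change of variables $z_0=x_0$, $z_i=x_0+y_i$ identifies $\xi\times\xi^q\ni(v,\y)$ with $\t^{q+1}$ via a measure-preserving bijection. Using the standard disintegration
\[\intl_{\A_{n,k}} f(\t)\,d\t = \intl_{G_{n,k}} d\xi\intl_{\xi^\perp} f(\xi+u)\,du\]
for the $O(n)$-invariant measure on the affine Grassmannian (see \cite{Ru04d}), the triple integral over $G_{n,k}$, $\xi^\perp$, and $\t^{q+1}$ collapses to $\intl_{\A_{n,k}} d\t \intl_{\t^{q+1}}\cdots$. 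Collecting the scalars produces the constant $c=(q!)^{n-k}\sigma_{n,q}/\sigma_{k,q}$.

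The argument is essentially a bookkeeping exercise once the linear formula and the disintegration of $d\t$ are in place; the step that requires genuine care is the measure-preserving identification of $\xi\times\xi^q$ with $\t^{q+1}$ for each $u\in\xi^\perp$. This is straightforward because the map $(v,y_1,\ldots,y_q)\mapsto(u+v,\,u+v+y_1,\ldots,u+v+y_q)$ is a sequence of translations along the $k$-dimensional subspace $\xi$, so its Jacobian is one and its image is exactly $\t^{q+1}$. The sole technical point is thus fixing the normalization of $d\t$ so that the displayed disintegration holds with no extra constant, which is precisely the convention referenced from \cite{Ru04d}.
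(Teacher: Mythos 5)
Your proposal is correct and follows essentially the same route as the paper's Appendix: translate by $x_0$ to invoke the linear formula (\ref{BlP}), use translation invariance of $\Del_q$ together with (\ref{mkj}), split $x_0$ into its $\xi$ and $\xi^\perp$ components, and apply the disintegration of $d\t$ over $G_{n,k}\times\xi^\perp$. The only cosmetic difference is that you substitute $y_i=x_i-x_0$ before applying Theorem \ref{the}, whereas the paper applies (\ref{BlP}) directly to the translated function; these are the same step.
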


Changing notation for the function $F$ in (\ref{BlP}) and (\ref{BlP2}), we obtain the following corollary.

\begin{corollary} \label{jik} Let $1\le q\le k\le n$. If $ F(\x) |\x|_q^{k-n} \in L^1 (\frM_{n,q})$, then
\be\label{BlPa}
\intl_{G_{n,k}} d\xi \intl_{\xi^q}F(\x)\, d_\xi \x  = \frac{\sigma_{k,q}}{\sigma_{n,q}} \intl_{\frM_{n,q}}  F(\x) |\x|_q^{k-n} \, d\x.  \ee

If $F(\tilde \x)\, \Del_q^{k-n}(\tilde \x)\in L^1 (\frM_{n,q+1})$, then
\be\label{BlP2b}
 \intl_{\A_{n,k}}  d\t \intl_{\t^{q+1}} F(\tilde \x)\, d_\t \tilde \x =\frac {(q!)^{k-n}\sigma_{k,q}} {\sigma_{n,q}} \intl_{\frM_{n,q+1}} \! F(\tilde \x)\, \Del_q^{k-n}(\tilde \x)\, d\tilde \x.\ee
\end{corollary}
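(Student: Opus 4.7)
The plan is to derive both identities in the corollary as immediate consequences of Theorem~\ref{the} and Theorem~\ref{BlP2vv} by a simple renaming of the integrand. For (\ref{BlPa}), I would apply (\ref{BlP}) not to $F$ but to the modified function $\Phi(\x) = F(\x)\, |\x|_q^{k-n}$, chosen precisely so that the Jacobian factor $|\x|_q^{n-k}$ on the right-hand side of (\ref{BlP}) cancels against $|\x|_q^{k-n}$, leaving $F(\x)$ alone under the inner integral. The hypothesis $F(\x)\,|\x|_q^{k-n} \in L^1(\frM_{n,q})$ is exactly the $L^1$-condition required to invoke Theorem~\ref{the} with $\Phi$ in place of $F$. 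Rearranging the resulting equality then yields (\ref{BlPa}) with the reciprocal constant $\sigma_{k,q}/\sigma_{n,q}$.

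For (\ref{BlP2b}) I would proceed identically, applying (\ref{BlP2}) to $\tilde \Phi(\tilde \x) = F(\tilde \x)\, \Del_q^{k-n}(\tilde \x)$. The factor $\Del_q^{n-k}(\tilde \x)$ appearing on the right-hand side of (\ref{BlP2}) cancels, and the integrability assumption $F(\tilde \x)\, \Del_q^{k-n}(\tilde \x) \in L^1(\frM_{n,q+1})$ is exactly what legitimises the substitution. The constant in front of the resulting identity is the reciprocal of $c = (q!)^{n-k}\sigma_{n,q}/\sigma_{k,q}$ from (\ref{mkj10}), namely $(q!)^{k-n}\sigma_{k,q}/\sigma_{n,q}$, matching the claim.

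The only subtle point I anticipate is the behavior of the singular factor $|\x|_q^{k-n}$ (respectively $\Del_q^{k-n}(\tilde \x)$) on the locus where the columns of $\x$ (respectively the points $x_0, x_1, \ldots, x_q$) are linearly (respectively affinely) dependent; since $k-n \le 0$, this factor blows up there. However, in both the Lebesgue measure on $\frM_{n,q}$ and the iterated measure $d\xi\, d_\xi \x$ over $k$-planes with $q \le k$, this bad locus has measure zero, so the substitution is unambiguous and no regularisation is required. Beyond this observation, the argument is a pure rearrangement of multiplicative constants.
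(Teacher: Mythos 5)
Your proposal is correct and coincides with the paper's own argument: the corollary is obtained precisely by applying (\ref{BlP}) and (\ref{BlP2}) to the modified integrands $F(\x)\,|\x|_q^{k-n}$ and $F(\tilde\x)\,\Del_q^{k-n}(\tilde\x)$, so that the Jacobian factors cancel and the constants invert. Your extra remark on the measure-zero degenerate locus is a harmless and reasonable clarification.
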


An alternative proof of (\ref{BlPa}) and (\ref{BlP2b}) for nonnegative $F$, but without explicit constant and the explicit assumption for $F$, was given in  \cite[Section 5]{BL}.

 The right-hand sides of  (\ref{BlPa}) and (\ref{BlP2b}) can be treated as particular cases of the corresponding Riesz type distributions  on matrix spaces. For example, the right-hand side of  (\ref{BlPa}) agrees with the  Riesz distribution
 \be\label{UY} \z_F (\a) = \intl_{\frM_{n,q}}  F(\x) |\x|_q^{\a-n} \, d\x.  \ee
If  $F$ is smooth and rapidly decreasing as a function on $\bbr^{nq}$, the integral (\ref{UY}) is absolutely convergent provided   $Re \, \a> q-1$ and extends meromorphically  to all complex $\a$; see \cite[Lemma 4.2]{Ru06a}   for details.

Important additional features  of (\ref{BlPa}) and (\ref{BlP2b}) can be revealed if  we choose $F(\tilde \x)=f_1(x_0)....f_q(x_q)$ and write the right-hand sides as the corresponding multilinear forms. Then (\ref{BlP2b}) can be written in terms of the $k$-plane  transforms  of $f_j$ defined by $R_k f_j=\int_\t f_j$ with integration against the usual Lebesgue measure on $\t$. Specifically, for $1\le q\le k\le n$,
 \bea
 \intl_{\A_{n,k}}  \Big [\prod\limits_{j=0}^{q} (R_k f_j)(\t)\Big] d\t&=&\frac {(q!)^{k-n}\sigma_{k,q}} {\sigma_{n,q}}  \,
 \Big (\intl_{\rn}\cdots \intl_{\rn} \Big ) \Del^{k-n}_q(x_0, x_1, \ldots, x_q)\nonumber\\
\label{BlP2br} &\times& \left \{\prod\limits_{j=0}^{q}  f_j(x_j) \right \}
 dx_0\ldots dx_q.\eea
This equality agrees with the formula (2.11) in   Baernstein II and  Loss \cite{BL}. If $q=k$, it agrees with  Lemma 1 of Drury \cite{Dru84}. However, in both works, the constant on the right-hand side and conditions for $f_j$ are not specified.

Another consequence of (\ref{BlP2b}) can be obtained if we set $q=k<n$ and choose $F(\tilde \x)\equiv F(x_0,  \ldots, x_k)$ in the form
\be\label{Buu}
F(x_0,  \ldots, x_k)= \Big [\prod\limits_{j=0}^{k} f(x_j)\Big]\, \Big [\prod\limits_{j=k+1}^{k+\ell} \intl_{\t (x_0,  \ldots, x_k)} f(y_j)\, d_\t y_j\Big],\ee
where $\t (x_0,  \ldots, x_k)$ is a  $k$-plane containing the points $x_0, x_1, \ldots, x_k$, $f$ is a  function on $\rn$, and $\ell$ is a positive integer. The $k$-plane $\t (x_0, \ldots, x_k)$  is unique if $x_0, x_1, \ldots, x_k$ are in  general position. For every fixed $\t\in \A_{n,k}$ and $F$ of the form (\ref{Buu}) we have
\bea
\intl_{\t^{k+1}} F(\tilde \x)\, d_\t \tilde \x &=& \Big [\prod\limits_{j=0}^{k} \intl_\t f(x_j)\, d_\t x_j\Big]\,  \Big [\prod\limits_{j=k+1}^{k+\ell} \intl_{\t } f(y_j)\, d_\t y_j\Big]\nonumber\\
&=&  [(R_k f)(\t)]^{k+\ell+1}. \nonumber\eea
Hence (\ref{BlP2b}) yields
\bea
\label{Buu1} && \intl_{\A_{n,k}}   [(R_k f)(\t)]^{k+\ell+1} d\t= \frac {(k!)^{k-n}\sigma_{k,k}} {\sigma_{n,k}}\,
 \Big (\intl_{\rn} f(x_0)  \ldots \intl_{\rn} f(x_k) \Big )\qquad \quad \\
 &&\times  \Big [\prod\limits_{j=k+1}^{k+\ell} \intl_{\t (x_0,  \ldots, x_k)} f(y_j)\, d_\t y_j\Big]\,
\Del^{k-n}_k(x_0,  \ldots, x_k)\, dx_0, \ldots, dx_k.\nonumber\eea

This formula is well-justified provided that the right-hand side of it exists in the Lebesgue sense; cf. the  condition $F(\tilde \x)\, \Del_q^{k-n}(\tilde \x)\in L^1 (\frM_{n,q+1})$ for   (\ref{BlP2b}).

The case $k+\ell+1=n$ in (\ref{Buu1}) is known as {\it Drury's identity}; cf. formula (4) in \cite{Dru84}, which is understood in the sense of analytic continuation  according to Gelbart  \cite{Ge}.

\noindent{\bf Concluding Remark.}
An analogue of the polar decomposition in Lemma \ref{l2.3} is known for complex and quaternionic matrices and in the context of formally real Jordan
algebras; see, e.g., Zhang \cite[Lemma 3.1]{Zha1},   Moghadasi \cite[Theorem 2.5]{Mog},  Forrester \cite{Fo},  Faraut and  Travaglini  \cite[Section 4]{FT}. We believe that our proof of Theorem \ref{the} and  its consequences extend to the corresponding more general settings.

\section{Appendix: Proof of Theorem \ref{BlP2vv}}

Fix any $x_0 \in \rn$  and set $\tilde \x =(x_0, \x)= (x_0, x_1, \ldots, x_q) \in \frM_{n,q+1}$. We  replace   $F(\x)\equiv F (x_1, \ldots, x_q)$ by $F (x_0, x_1 + x_0 , \ldots, x_q + x_0)$ in (\ref{BlP}) to get
\bea
&&\intl_{\frM_{n,q}} F(x_0,\x)\, d\x =\intl_{\frM_{n,q}} F (x_0, x_1 + x_0 , \ldots, x_q + x_0)\, d\x\nonumber\\
&&=\frac{\sigma_{n,q}}{\sigma_{k,q}} \intl_{G_{n,k}} d\xi \intl_{\xi^q} F (x_0, x_1 + x_0 , \ldots, x_q + x_0)\, |\x|_q^{n-k}(\x)\, d_\xi \x.\nonumber\eea
Note that
\[ \frac{1}{q!}\, |\x|_q=\Del_q (\x)\equiv \Del_q  (0, x_1, \ldots, x_q)=\Del_q (x_0, x_1 + x_0 , \ldots, x_q +x_0);\]
 see (\ref{mkj}) and (\ref{mkj1}). Hence
\bea
\label{lmy} \intl_{\frM_{n,q}} F(x_0,\x)\, d\x &=&c\intl_{G_{n,k}} d\xi \intl_{\xi^q} F (x_0, x_1 + x_0 , \ldots, x_q + x_0)\nonumber\\
\label{lmy} &\times&[\Del_q (x_0, x_1 + x_0 , \ldots, x_q +x_0)]^{n-k}\, d_\xi \x,\eea
where
\[c=\frac{(q!)^{n-k}\sigma_{n,q}}{\sigma_{k,q}}.\]
Now we integrate (\ref{lmy}) in the $x_0$-variable and change the order of integration. This gives
\bea
 \intl_{\frM_{n,q+1}} F(\tilde \x)\, d\tilde \x&=&c\intl_{G_{n,k}} d\xi \intl_{\rn} dx_0 \intl_{\xi^q}  F (x_0, x_1 + x_0 , \ldots, x_q + x_0)\nonumber\\
  &\times& [\Del_q (x_0, x_1 + x_0 , \ldots, x_q +x_0)]^{n-k} \, d_\xi x_1\cdots d_\xi x_q.\nonumber\eea
Let $x_0= y_0 +z_0$, where $y_0 \in \xi$ and  $z_0 \in \xi^\perp$, so that  $\t=\xi +z_0$ is an affine $k$-plane. Then the right-hand side becomes
\bea
 &&c\intl_{G_{n,k}} d\xi \intl_{\xi^\perp} dz_0\intl_{\xi} dy_0\intl_{\xi^q}  F (y_0 +z_0, x_1 + y_0 +z_0 , \ldots, x_q + y_0 +z_0)\nonumber\\ &&\times [\Del_q (y_0 +z_0, x_1 + y_0 +z_0 , \ldots, x_q + y_0 +z_0)]^{n-k} d_\xi x_1\cdots d_\xi x_q.\nonumber\eea
Setting $y_0 =\eta_0, \;   x_1 + y_0=\eta_1, \ldots,  x_q + y_0=\eta_q$, we write this expression as
\bea
&&c\intl_{G_{n,k}} \! d\xi \intl_{\xi^\perp} dz_0\intl_{\xi} d\eta_0\intl_{\xi^q}  F (\eta_0 +z_0, \eta_1 +z_0 , \ldots, \eta_q +z_0)\nonumber\\ &&\times[\Del_q (\eta_0 +z_0, \eta_1 +z_0 , \ldots, \eta_q +z_0)]^{n-k} d_\xi \eta_1\cdots d_\xi \eta_q\nonumber\\
&&=c\intl_{\A_{n,k}}\! d\t\intl_{\t^{q+1}} \! \!  F (x_0, x_1, \ldots, x_q)\,  [\Del_q (x_0, x_1, \ldots, x_q)]^{n-k}\, d_\t x_0\cdots d_\t x_q, \nonumber\eea
which gives (\ref {BlP2}). ${}$ \hfill $\Box$

\bibliographystyle{amsplain}

\begin{thebibliography}{10}


%\bibliographystyle{amsalpha}
%\begin{thebibliography}{A}


\bibitem {BL} A. Baernstein, II and M. Loss, Some conjectures about $L^p$ norms of $k$-plane transforms. {\it Rend.
Sem. Mat. Fis. Milano} {\bf 67} (1997), 9--26.

\bibitem {BBFGI} J. Bennett, N. Bez, T. C. Flock,  S. Gutiérrez, and M. Iliopoulou, A sharp k-plane Strichartz inequality for the Schrödinger equation. Preprint 2017,  	arXiv:1611.03692v2 [math.CA].

\bibitem {Bla}  W.  Blaschke,  Integralgeometrie 2: Zu Ergebnissen von M.W. Crofton. {\it Bull. Math. Soc. Roum. Sci.} {\bf 37} (1935), 3--11.

\bibitem {Chr84} M. Christ, Estimates for the k-plane transform. {\it Indiana Univ. Math. J.} {\bf 33} (1984), 891–-910.

\bibitem {DPP}   S. Dann, G. Paouris and P. Pivovarov,  Bounding marginal densities via affine isoperimetry. {\it Proc. Lond. Math. Soc.}  {\bf 113}(3)(2016),  140--162.

\bibitem {Dro14}  A. Drouot,     Sharp constant for a k-plane transform inequality. {\it Anal. PDE} {\bf 7} (2014),  1237–-1252.

\bibitem {Dru84}  S. W. Drury,  Generalizations of Riesz potentials and $L^p$ estimates for certain $k$-plane transforms. {\it Illinois J. Math.} {\bf 28} (1984),  495--512.

\bibitem  {FT} J. Faraut, and G. Travaglini, Bessel
functions associated with representations of formally real Jordan
algebras. {\it J. of Funct. Analysis} \textbf{71} (1987), 123--141.

\bibitem {Fl16} T.C. Flock, Uniqueness of extremizers for an endpoint inequality of the k-plane transform. {\it J. Geom. Anal.} {\bf 26} (2016),  570–-602.

\bibitem {Fo}  P. J. Forrester, Matrix polar decomposition and deneralisations of the Blaschke-Petkantschin formula in integral geometry.  	 Preprint 2017, arXiv:1701.04505.

\bibitem {G}  F.R. Gantmacher, \textit{The theory of matrices}, Vol. 1,
Chelsea Publ. Company, (New York, 1959).

\bibitem {Ga07} R. J. Gardner, The dual Brunn-Minkowski theory for bounded Borel sets: dual affine quermassintegrals and inequalities. {\it Adv. Math.} {\bf 216} (2007),  358--386.

\bibitem  {Ge} S.S. Gelbart, \textit{ Fourier analysis on matrix
space}, Memoirs of the Amer. Math. Soc., \textbf{108}, AMS,
Providence, RI, 1971.

\bibitem {Herz} C. Herz, Bessel functions of matrix
argument. \textit{Ann. of Math.} \textbf{61} (1955), 474--523.

\bibitem {J} E.B.V. Jensen, {\it Local stereology}, Advanced Series on Statistical Science \& Applied Probability, {\bf 5}. World Scientific Publishing Co., Inc., River Edge, (NJ, 1998).

\bibitem  {Kh}  S.P. Khekalo, Riesz potentials in the
space of rectangular matrices and iso-Huygens deformations of the
Cayley-Laplace operator. \textit{Doklady Mathematics}, \textbf{63} (2001),
No. 1, 35--37.

\bibitem {M}  R. E. Miles,  Isotropic random simplices. {\it Advances in Appl. Probability} {\bf  3} (1971), 353--382.

\bibitem  {Mi}  E. Milman,  Generalized intersection bodies. {\it J. Funct. Anal.} {\bf 240} (2006),  530–-567.

\bibitem {Mog} S. Moghadasi,  Polar decomposition of the k-fold product of Lebesgue measure on $\rn$. {\it Bull. Aust. Math. Soc.} {\bf 85} (2012), 315--324.

\bibitem  {Mu} R.J. Muirhead, \textit{Aspects of multivariate
statistical theory}, John Wiley \& Sons. Inc. (New York, 1982).

\bibitem  {P}  B.  Petkantschin,  Integralgeometrie 6. Zusammenh\"ange zwischen den Dichten der linearen Unterräume im $n$-dimensionalen Raum.  {\it Abh.Math.Semin.Univ.Hambg.}  {\bf 11}, Issue 1, (1935),  249--310.

\bibitem  {Ra} M. Ra\"{\i}s,  Distributions homog\`enes sur des
espaces de matrices. \textit{Bull. Soc. math. France, Mem.} \textbf{30}
(1972), 3--109.

\bibitem  {Ru04d}  B. Rubin,   Radon transforms on affine Grassmannians. {\it Trans. Amer. Math. Soc.} \textbf{356}(2004), 5045--5070.


\bibitem  {Ru06a} B. Rubin, Riesz potentials and integral geometry in the space of rectangular matrices. \textit{Advances in Math.} \textit{205} (2006), 549--598.

\bibitem {San} L. Santalo, {\it Integral geometry and geometric probability},  Cambridge University Press,  (Cambridge UK, 2004).

\bibitem  {SchW92}  R. Schneider and  W. Weil, {\it Integralgeometrie} (German).  Teubner Skripten zur Mathematischen Stochastik.  B. G. Teubner (Stuttgart, 1992).

\bibitem  {SchW}  R. Schneider and  W. Weil, {\it Stochastic and Integral Geometry},  Springer (Berlin- Heidelberg, 2008).

\bibitem  {St} E. M. Stein,   Analysis in matrix spaces and some new representations of $SL(N,C)$. \textit{ Ann. of Math.} \textbf{86} (1967), 461--490.

\bibitem  {Zha1} Genkai Zhang, Radon transform on real, complex, and quaternionic Grassmannians. \textit{Duke Math. J.} \textbf{138} (2007),  137--160.

\end{thebibliography}

\end{document}